\documentclass[11pt]{amsart}

\usepackage[T1]{fontenc}
\usepackage{lmodern}
\usepackage{amsmath,amssymb,mathtools,amsthm}
\usepackage{microtype}
\usepackage{booktabs,graphicx}
\usepackage{enumitem}
\usepackage{placeins}
\usepackage{algorithm}
\usepackage{float}
\usepackage[noend]{algpseudocode}
\usepackage[margin=1.05in]{geometry}
\usepackage[hidelinks]{hyperref}

\newtheorem{theorem}{Theorem}[section]
\newtheorem{proposition}[theorem]{Proposition}
\newtheorem{lemma}[theorem]{Lemma}

\theoremstyle{definition}

\newcommand{\C}{\mathbb C}

\newcommand{\interior}{\operatorname{int}}
\newcommand{\disc}{\operatorname{disc}}
\providecommand{\doi}[1]{\href{https://doi.org/#1}{\texttt{doi:#1}}}
\setlist[enumerate]{label=\textup{(\roman*)},leftmargin=*,itemsep=2pt,topsep=4pt}
\allowdisplaybreaks[1]

\title[Simultaneous Newton--Halley Failure in Degree Four]
{Simultaneous Global Convergence Failure of\\
Newton's and Halley's Methods in Degree Four}
\author{Alessio Basti}
\address{Department of Engineering and Geology, ``G. d'Annunzio'' University of Chieti--Pescara, Italy}

\author{Tommaso Cremaschi}
\address{School of Mathematics, Trinity College Dublin, Ireland}

\date{}
\subjclass[2020]{Primary 65H05; Secondary 30D05, 37F10}
\keywords{Newton's method, Halley's method, polynomial root finding, global convergence, attracting cycles}

\hypersetup{
  pdftitle={Simultaneous Newton--Halley Failure in Degree Four},
  pdfsubject={Simultaneous Newton--Halley Failure in Degree Four}
}

\begin{document}
\begin{abstract}
We construct an explicit quartic polynomial for which Newton's and
Halley's root-finding methods both fail to converge to a root on nonempty open sets of initial values. The example can be found through a reproducible computational search within a one-parameter family of
quartics designed to exhibit a prescribed Newton two-cycle. We then
prove analytically that Newton's method has a superattracting
two-cycle and that Halley's method has a distinct attracting
extraneous two-cycle, using a two-interval contraction argument. This fills a degree-four gap in the simultaneous
Newton--Halley failure problem. The computational approach also identifies
another quartic with numerical evidence of simultaneous failure of Newton's,
Halley's, and Schr\"oder's methods.
\end{abstract}

\maketitle

\section{Introduction and main result}\label{sec:introduction}

Newton's and Halley's methods are among the classical iterations for solving
a nonlinear equation $f(z)=0$. Near a simple root, Newton converges
quadratically and Halley cubically; see, for example,
\cite{Traub,Householder}. Those local orders say little, however, about what
happens when the starting value is not already close to a root. For a
polynomial, the iteration itself is a rational map, and a remote starting
value may be attracted to a periodic orbit that has nothing to do with the
roots \cite{AmatBusquierPlaza}.

For $p\in\C[z]$ we write
\begin{equation}\label{eq:methods}
N_p(z)=z-\frac{p(z)}{p'(z)},\qquad
H_p(z)=z-\frac{2p(z)p'(z)}
{2p'(z)^2-p(z)p''(z)}.
\end{equation}
A root is an attracting fixed point of either map. The global obstruction
considered here is an \emph{extraneous attracting cycle}: an attracting
periodic orbit containing no root of $p$. Such a cycle has an open basin, so
it captures a set of starting values of positive planar measure. In
particular, it prevents \emph{almost-everywhere root convergence}, by which
we mean convergence to a root for planar Lebesgue-almost every initial value
$z_0\in\C$. Global and semilocal results under additional
hypotheses can be found, for instance, in
\cite{DaviesDawson,LingXu}. The complex dynamics of root-finding
iterations, including basin geometry and parameter-space phenomena,
has also been investigated numerically and dynamically; see, for
example, \cite{Basto,Scott,CamposCanelaVindel}.

Already in degree three, neither method is globally reliable on its own.
Roberts and Horgan--Kobelski gave cubic examples with extraneous attracting
cycles for Newton and for Halley separately \cite{RobertsHK}. Collins and
Hunter later constructed simultaneous failures of Newton, Halley, and
Schr\"oder iterations in every degree $d\ge5$ and explicitly noted the
remaining gap in degrees three and four \cite{CollinsHunter}. For cubics,
they conjectured that at least one of Newton's and Halley's methods is
generally convergent. This leaves degree four in a particularly natural
position: can Newton and Halley both fail on the same quartic polynomial?

Our main result gives an affirmative answer.

In particular, we regard \eqref{eq:methods} as rational maps on the Riemann sphere, filling
in removable singularities. A two-cycle is a pair of distinct finite points
$z_1,z_2$ satisfying
\[
R(z_1)=z_2,\qquad R(z_2)=z_1.
\]
When $R$ is holomorphic near the cycle, its multiplier is
\[
\mu=(R^{\circ2})'(z_1)=R'(z_1)R'(z_2),
\]
where $R^{\circ 2}=R\circ R$ denotes the second iterate of $R$. The cycle is attracting if $|\mu|<1$ and superattracting if $\mu=0$.

\begin{theorem}[Quartic obstruction]\label{thm:quartic}
The polynomial
\[
P(z)=24z^4-35z^3-2z+2
\]
has four distinct roots. Its Newton map has the extraneous superattracting
two-cycle $0\leftrightarrow1$. Its Halley map has an extraneous attracting
two-cycle $z_{H,1}\leftrightarrow z_{H,2}$ with
\begin{equation}\label{eq:cycle-location}
z_{H,1}\in\left(-\frac1{10},-\frac{49}{500}\right),\qquad
z_{H,2}\in\left(-\frac{519}{1000},-\frac{259}{500}\right),
\end{equation}
and multiplier satisfying $|\mu_H|<3/10$. Hence neither map has
almost-everywhere root convergence.
\end{theorem}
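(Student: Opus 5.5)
Concretely, I would split the statement into four verifications: distinct roots, the Newton cycle, the Halley cycle, and the dynamical consequence.

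\textbf{Distinct roots.} I would either compute $\disc(P)$ and show it is a nonzero integer, or show $\gcd(P,P')=1$ by the Euclidean algorithm over $\mathbb{Q}$.

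\textbf{Newton cycle.} With $P(0)=2$, $P'(0)=-2$, we get $N_P(0)=0-2/(-2)=1$. With $P(1)=24-35-2+2=-11$ and $P'(z)=96z^3-105z^2-2$, so $P'(1)=-11$, we get $N_P(1)=1-(-11)/(-11)=0$. Since $N_P'=PP''/P'^2$ and $P''(z)=288z^2-210z$ vanishes at $z=0$, we have $N_P'(0)=0$. Hence the multiplier is $0$: the cycle is superattracting, and it is extraneous because $P(0),P(1)\neq0$.

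\textbf{Halley cycle.} I would set $I_1=[-1/10,-49/500]$ and $I_2=[-519/1000,-259/500]$ and prove three things on these intervals.
\begin{enumerate}
\item The Halley denominator $D=2P'^2-PP''$ has no zero on $I_1\cup I_2$, so $H=H_P$ is real-analytic there.
\item $H(I_1)\subset\interior I_2$ and $H(I_2)\subset\interior I_1$.
\item $\sup_{I_1}|H'|\cdot\sup_{I_2}|H'|<3/10$.
\end{enumerate}
Then $H^{\circ2}$ maps $I_1$ into its interior and is a contraction there, so by Banach it has a unique fixed point $z_{H,1}\in\interior I_1$. Setting $z_{H,2}=H(z_{H,1})\in\interior I_2$, the two points are distinct and $P$ does not vanish there, so this is an extraneous two-cycle. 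Its multiplier satisfies $|\mu_H|=|H'(z_{H,1})H'(z_{H,2})|<3/10$.

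For (ii) I would not evaluate $H$ at the endpoints only. If $|H'|$ is bounded by $L_1$ on $I_1$, then $H(I_1)$ lies within $L_1|I_1|$ of $H(c)$, where $c$ is any point of $I_1$ and $|I_1|$ is the length of $I_1$. So it suffices to evaluate $H$ exactly in rational arithmetic at one point of each interval and combine this with the derivative bound.

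\textbf{Derivative bound (main obstacle).} The main obstacle is (iii), a rigorous bound on $H'$ over each interval. I would use the closed form
\[
H'=\frac{P^2\,(3P''^2-2P'P''')}{(2P'^2-PP'')^2}.
\]
Both $P^2$ and $3P''^2-2P'P'''$ are polynomials with rational coefficients. On each short interval I would bound numerator and denominator using interval arithmetic with exact rationals: expand about the midpoint and bound the Taylor remainder by coefficient magnitudes. The same estimate gives a positive lower bound on $|D|$, which also settles (i). If the bound is too crude on a full interval, I would subdivide it into a few pieces.

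A numerical preview should tell me whether $|H'|$ is small at one of the points, which would make the product bound easy. If the product bound alone still fails for the contraction step, I would fall back on a Brouwer/IVT argument for existence of the fixed point. Uniqueness is not required, only a bound on the multiplier at the cycle.

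\textbf{Conclusion.} An attracting cycle has an open basin of positive measure, and that basin is disjoint from the basins of the roots. So for each of $N_P$ and $H_P$, a set of initial values of positive planar measure fails to converge to a root, and neither map has almost-everywhere root convergence.
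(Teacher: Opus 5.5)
Your proposal is correct and follows essentially the paper's route: the Newton cycle from $P(0)=-P'(0)$, $P(1)=P'(1)$, $P''(0)=0$; distinct roots from $\disc(P)=-167753740\neq0$; and the Halley cycle by exact evaluation at the interval centers combined with derivative bounds ($\sup|h'|<1/5$ and $<3/2$ on the two intervals, so the product is $<3/10$) and a two-interval contraction lemma. The only differences are that the paper works in the reflected variable $t=-z$ and bounds $F$, $K$, $D$ by monotonicity and endpoint evaluation rather than by Taylor or interval arithmetic.

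One quantitative caution: take $c$ to be the midpoint and use the radius, $|H(y)-H(c)|\le L\,|I|/2$, as the paper does. With your stated spread $L|I|$, the inclusion $H(I_2)\subset\interior I_1$ cannot be certified. Indeed $\sup_{I_2}|H'|\approx1.38$, so $L|I_2|\approx1.38\cdot10^{-3}$ already exceeds the radius $10^{-3}$ of $I_1$.
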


The example can be recovered systematically: imposing the Newton two-cycle $0\leftrightarrow1$ reduces the quartic
coefficients to a one-parameter family; a deterministic search over rational
values of the parameter, initialized at a distinguished critical point of
the Halley map, produces the value $a=-35/24$ and suggests suitable trapping
intervals. The
claims in Theorem~\ref{thm:quartic} are then proved analytically: the Newton
two-cycle is exact and superattracting, while for Halley's method two
disjoint real intervals are shown to be mapped into one another and the
second iterate is proved to be a contraction.

Thus the degree-four obstruction differs structurally from the
higher-degree construction of Collins and Hunter: the two methods fail for
the same polynomial, but through distinct attracting cycles. In
Section~\ref{sec:discussion} we return to this point and examine whether
Newton's and Halley's methods can share an attracting two-cycle in degree
four. McMullen's rigidity theorem gives a broader degree-four barrier for
generally convergent algebraic root-finding algorithms \cite{McMullen}; the
result above provides a related explicit obstruction for the specific
Newton--Halley pair.

Section~\ref{sec:construction} of the paper describes the numerical search leading to the
quartic. Section~\ref{sec:analytic-proof} gives the analytic proof of
Theorem~\ref{thm:quartic}. Section~\ref{sec:basins} illustrates the dynamics
through basin portraits and representative orbits. Finally,
Section~\ref{sec:discussion} places the quartic result in the context of the
common-cycle problem.

\section{Numerical search for the candidate}\label{sec:construction}

We first analytically construct a family with a superattracting Newton two-cycle, and then numerically search within this family for an attracting Halley two-cycle.

Multiplying a polynomial by a nonzero constant does not change either its Newton or its Halley map, so we may restrict attention to monic quartics. We choose the particularly simple Newton two-cycle $0\leftrightarrow1$ and write
\[
p(z)=z^4+az^3+bz^2+cz+e.
\]
The conditions
\[
N_p(0)=1,\qquad N_p(1)=0,\qquad p''(0)=0
\]
are equivalent to
\[
p(0)=-p'(0),\qquad p(1)=p'(1),\qquad p''(0)=0,
\]
provided $p'(0)p'(1)\ne0$. The last condition makes the Newton cycle superattracting. Solving these equations gives
\[
b=0,\qquad e=-c=3+2a,
\]
and hence the one-parameter family
\begin{equation}\label{eq:family} p_a(z)=z^4+az^3-(3+2a)z+(3+2a),\qquad a\in\C\setminus\{-3/2,-1\}. \end{equation}

It remains to find a parameter for which Halley also has an extraneous
attracting two-cycle. For the search, we restrict to $-3/2<a<-1$, set
$s_a=3+2a>0$, and reflect the variable:
\begin{equation}\label{eq:reflected-family}
F_a(t)=p_a(-t)=t^4-at^3+s_at+s_a.
\end{equation}
All four terms are positive when $t>0$, so $F_a$ has no positive root.
Writing
\[
D_a(t)=2F_a'(t)^2-F_a(t)F_a''(t),\qquad
h_a(t):=H_{F_a}(t)=t-\frac{2F_a(t)F_a'(t)}{D_a(t)},
\]
we have $h_a(t)=-H_{p_a}(-t)$. Thus a positive two-cycle
$u\leftrightarrow v$ of $h_a$ gives an extraneous two-cycle
$-u\leftrightarrow-v$ of $H_{p_a}$.
It therefore suffices to find an attracting positive two-cycle of $h_a$.

Rather than solve the equation $h_a^{\circ2}(t)=t$ directly, we iterate
$h_a$ and look for an orbit that settles into an alternation between two
values. To choose the starting point, we use the fact that every attracting
cycle of a rational map of degree at least two attracts a critical point;
see \cite{Milnor}. This motivates following a critical orbit rather than
starting from an arbitrary point. For our family, the derivative provides
a simple way to choose such a starting point. Differentiating gives
\[
h_a'(t)
=\frac{F_a(t)^2\bigl(3F_a''(t)^2-2F_a'(t)F_a'''(t)\bigr)}{D_a(t)^2}
=\frac{12F_a(t)^2B_a(t)}{D_a(t)^2},
\]
where
\begin{equation}\label{eq:B}
B_a(t)=20t^4-20at^3+6a^2t^2-4s_at+as_a.
\end{equation}
Since $F_a(t)>0$ for $t>0$, a positive zero of $B_a$ is a nonroot
critical point of $h_a$, provided $D_a$ does not vanish there.

The coefficients of $B_a$, in descending powers, have signs
$+,+,+,-,-$. There is only one sign change, so Descartes' rule of signs
gives at most one positive zero. On the other hand,
\[
B_a(0)=as_a<0,\qquad B_a(1)=8a^2-25a+8>0.
\]
Therefore $B_a$ has exactly one positive zero,
which we denote by $c_a\in(0,1)$. We approximate it by bisection on
$[0,1]$ and use it as the starting point. This supplies a consistent choice
for every parameter, but does not imply that this particular critical orbit
detects every attracting cycle.

We test rational parameters $a=-b/q$ with
\[
2\le q\le64,\qquad q<b<\frac{3q}{2},\qquad \gcd(b,q)=1,
\]
in increasing order of $q$, and then of $b$. These inequalities place $a$
in the chosen interval, while the coprimality condition avoids testing the
same rational number more than once. For each parameter, we first compute
$200$ iterates, allowing time for the initial transient to decay, and then
set $u=t_{200}$ and $v=h_a(u)$. A two-cycle would satisfy $h_a(v)=u$, so we
compute the closure residual and the estimated multiplier modulus
\[
\varepsilon=|h_a(v)-u|,\qquad m=|h_a'(u)h_a'(v)|.
\]
The parameter is accepted as a numerical candidate only when
\begin{equation}\label{eq:screen}
u>0,\qquad v>0,\qquad |u-v|>10^{-3},\qquad
\varepsilon<10^{-10},\qquad m<\frac12.
\end{equation}
Positivity places the candidate in the root-free region. The separation
test distinguishes two different values from an approximate fixed point,
and the small closure residual tests whether the orbit nearly repeats
after two steps. Finally, the multiplier test requires the estimated
modulus to lie well below the attraction threshold $1$.

\begin{algorithm}[htbp]
\caption{Search for an extraneous attracting Halley two-cycle}
\label{alg:search}
\begin{algorithmic}[1] 
\For{$q=2,\ldots,64$}
  \For{increasing integers $b$ with $q<b<3q/2$ and $\gcd(b,q)=1$}
    \State Set $a=-b/q$.
    \State Approximate the zero $c_a$ of $B_a$ by $60$ bisection steps on
           $[0,1]$; call the result $t$.
    \State Apply $t\gets h_a(t)$ a total of $200$ times.
    \State $u\gets t$, $v\gets h_a(u)$.
    \State $\varepsilon\gets|h_a(v)-u|$,
           $m\gets|h_a'(u)h_a'(v)|$.
    \If{the conditions in \eqref{eq:screen} hold}
      \State \Return $(a,u,v)$ as a numerical candidate.
    \EndIf
  \EndFor
\EndFor
\State \Return no candidate found.
\end{algorithmic}
\end{algorithm}

The first accepted parameter is the $89$th tested one:
\[
a=-\frac{35}{24},\qquad
P=24p_{-35/24}=24z^4-35z^3-2z+2.
\]
Returning to the original variable gives the numerical estimates
\begin{equation}\label{eq:numerical-cycle}
z_{H,1}\approx-0.098810,\qquad
z_{H,2}\approx-0.518476,
\end{equation}
with estimated multiplier of about $0.0072$. For the reflected orbit, these
values suggest the rational intervals
\begin{equation}\label{eq:intervals}
I=\left[\frac{49}{500},\frac1{10}\right],\qquad
J=\left[\frac{259}{500},\frac{519}{1000}\right].
\end{equation}
The next section independently proves, for $h:=h_{-35/24}$, that
\[
h(I)\subset\interior J,\qquad h(J)\subset\interior I,
\]
and that $h^{\circ2}:I\to I$ is a contraction. Its fixed point $u_*\in I$
and its image $v_*=h(u_*)\in J$ then satisfy
$h(u_*)=v_*$ and $h(v_*)=u_*$. Since $I$ and $J$ are disjoint and positive,
this is an extraneous two-cycle. The contraction estimate gives
$|h'(u_*)h'(v_*)|<1$, so the cycle is attracting. 

\section{Proof of Theorem~\ref{thm:quartic}}\label{sec:analytic-proof}

We prove the existence of the Halley two-cycle by finding two disjoint
intervals that are mapped into one another, with a contractive return
map on either interval. We begin with an elementary criterion for this
two-step mechanism. The proofs of all lemmas and propositions are postponed to Appendix \ref{app:ausiliary}.

\begin{lemma}[Two-interval contraction]\label{lem:contraction}
Let $I$ and $J$ be disjoint nonempty closed intervals, and let $h$ be
continuously differentiable near $I\cup J$. If
\[
h(I)\subset J,\qquad h(J)\subset I,
\]
and
\[
\left(\sup_I |h'|\right)\left(\sup_J |h'|\right)<1,
\]
then $h$ has a unique two-cycle $t_1\leftrightarrow t_2$ in $I\cup J$,
and its multiplier satisfies
\[
|h'(t_1)h'(t_2)|<1.
\]
In particular, if $h$ extends holomorphically to a neighborhood of the
cycle, then the cycle is attracting as a complex dynamical cycle.
\end{lemma}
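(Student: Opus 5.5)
The plan is to apply the Banach fixed-point theorem to the return map $g:=h\circ h$ restricted to $I$. Since $h(I)\subset J$ and $h(J)\subset I$, the map $g$ sends $I$ into $I$. Set $L_I=\sup_I|h'|$ and $L_J=\sup_J|h'|$; both are finite because $h$ is $C^1$ near the compact set $I\cup J$. For $x,y\in I$, the mean value theorem applied on the segment $[x,y]\subset I$ gives $|h(x)-h(y)|\le L_I|x-y|$. The points $h(x)$ and $h(y)$ lie in the interval $J$, so the segment between them lies in $J$ and $|h(h(x))-h(h(y))|\le L_J|h(x)-h(y)|$. Hence $g$ is a contraction of the complete metric space $I$ with constant $L_IL_J<1$, and it has a unique fixed point $t_1\in I$. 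Put $t_2=h(t_1)\in J$. Then $h(t_2)=g(t_1)=t_1$, and $t_1\ne t_2$ because $I\cap J=\emptyset$, so $t_1\leftrightarrow t_2$ is a genuine two-cycle.

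For uniqueness, take any two-cycle $\{s_1,s_2\}$ contained in $I\cup J$. If $s_1\in I$, then $s_1$ is a fixed point of $g$ in $I$, so $s_1=t_1$ and $s_2=h(t_1)=t_2$. If $s_1\in J$, then $s_2=h(s_1)\in I$ and the same argument applies to $s_2$. Either way the cycle is $\{t_1,t_2\}$. (Any fixed point of $h$ in $I\cup J$ is excluded automatically, since $h$ swaps the disjoint intervals.)

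For the multiplier, $|h'(t_1)h'(t_2)|\le L_IL_J<1$ because $t_1\in I$ and $t_2\in J$. If $h$ extends holomorphically to a neighborhood of the cycle, the complex derivative of the extension restricts on the real line to the real derivative of $h$. So the complex multiplier $(h^{\circ2})'(t_1)=h'(t_1)h'(t_2)$ has modulus less than $1$, which is the definition of an attracting cycle given before Theorem~\ref{thm:quartic}.

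The only step needing care is the mean value estimate: both intervals must be used so that every intermediate point lies where the supremum bound holds. Convexity of $I$ and $J$, together with the inclusions $h(I)\subset J$ and $h(J)\subset I$, guarantees this. Everything else is routine.
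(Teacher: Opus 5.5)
Your proof is correct and follows the paper's argument: both apply the contraction mapping principle to $h^{\circ2}$ on $I$. The only difference is cosmetic: you apply the mean value theorem to each factor of $h\circ h$ separately, where the paper bounds $(h^{\circ2})'$ by the chain rule. Your explicit check that any two-cycle in $I\cup J$ must coincide with $\{t_1,t_2\}$ is a welcome detail that the paper leaves implicit.
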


We apply the lemma to $h=h_{-35/24}$ and the intervals chosen
in~\eqref{eq:intervals}. To verify the two inclusions, we locate the
image of each center and then control how much $h$ varies across the
corresponding interval. More precisely, writing
\[
I=[c_I-r_I,c_I+r_I],\qquad J=[c_J-r_J,c_J+r_J],
\]
a bound $|h'|\le L_I$ on $I$ gives
\begin{equation}\label{eq:center-control}
|h(x)-c_J|
\le |h(c_I)-c_J|+L_Ir_I,
\qquad x\in I.
\end{equation}
Thus $h(I)\subset\interior J$ whenever the right-hand side is smaller
than $r_J$. The other inclusion follows from the analogous estimate
on $J$. We first establish the derivative bounds needed for these
comparisons.

By the scaling invariance noted in Section~\ref{sec:construction},
we may replace $F_{-35/24}$ by its integer multiple
\[
F(t):=24F_{-35/24}(t)=P(-t)
=24t^4+35t^3+2t+2,
\qquad h=H_F.
\]
The derivative formula from Section~\ref{sec:construction}, specialized
to this polynomial, reads
\begin{equation}\label{eq:hprime}
h'(t)=\frac{24F(t)^2K(t)}{D(t)^2},
\end{equation}
where
\begin{equation}\label{eq:DK}
\begin{aligned}
D(t)&=2F'(t)^2-F(t)F''(t),\\
K(t)&=5760t^4+8400t^3+3675t^2-96t-35.
\end{aligned}
\end{equation}
It therefore suffices to bound the numerator factors and keep $D$
away from zero.

\begin{lemma}[Bounds on the proposed intervals]\label{lem:bounds}
The polynomials $F$, $K$, and $D$ satisfy the bounds in
Table~\ref{tab:bounds}. In particular, $h$ has no poles on $I\cup J$,
and
\begin{equation}\label{eq:derivative-bounds}
\sup_I|h'|<\frac15,\qquad
\sup_J|h'|<\frac32.
\end{equation}
\end{lemma}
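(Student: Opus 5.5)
The plan is to bound each factor of \eqref{eq:hprime} separately on $I$ and on $J$: upper bounds for $F$ and $|K|$, a positive lower bound for $D$, then combine them. All three are explicit polynomials with nonnegative or nearly sign-definite structure on positive intervals, so monotonicity arguments should suffice and no fine interval arithmetic should be needed.

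\emph{Step 1 ($F$).} On $t>0$ every coefficient of $F=24t^4+35t^3+2t+2$ is positive, so $F$ is increasing and positive. Hence $\sup_I F=F(1/10)$ and $\sup_J F=F(519/1000)$, and $\inf F$ is attained at the left endpoints; these are rational numbers evaluated exactly.

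\emph{Step 2 ($D$).} Next expand $D=2F'^2-FF''$ with $F'=96t^3+105t^2+2$ and $F''=288t^2+210t$. This gives an explicit degree-six polynomial. On each interval I will bound it below by splitting it into its positive-coefficient part, which is increasing and so is evaluated at the left endpoint, and its negative-coefficient part, which is increasing in absolute value and so is evaluated at the right endpoint. The resulting crude lower bound is positive, and this shows $h$ has no poles on $I\cup J$. The same splitting gives an upper bound for $D$ if one is needed.

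\emph{Step 3 ($K$).} The same splitting applies to $K$. On $I$, $t\approx0.1$, so $K\approx 0.58+8.4+36.75-9.6-35\approx 1.1$. There is therefore cancellation and $K$ is close to a zero; the critical point $c_a$ lies near here, which is why the multiplier is tiny. I will bound $|K|$ by the maximum of $|K|$ at the endpoints, after checking that $K'>0$ on $I$. That check is clear because $K'=23040t^3+25200t^2+7350t-96$ is positive for $t\ge 0.02$. Since $K$ is increasing on $I$ and on $J$, $|K|\le\max(|K(\text{left})|,|K(\text{right})|)$ on each interval.

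\emph{Step 4 (combine).} Finally, $\sup|h'|\le 24\,(\sup F)^2(\sup|K|)/(\inf D)^2$ on each interval. Comparing the resulting rationals against $1/5$ and $3/2$ gives \eqref{eq:derivative-bounds}. The entries of Table~\ref{tab:bounds} are exactly these endpoint evaluations, stated as rational bounds.

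The main obstacle is the lower bound for $D$ on $J$, and to a lesser extent on $I$. The crude split of positive and negative parts may be too lossy to keep $D$ bounded away from zero. It may also inflate the ratio beyond $3/2$ on $J$, where $|h'|$ is genuinely of order one, since the true multiplier is about $0.0072$ with $|h'|$ small only on $I$. If the crude split fails, I will subdivide $J$ into a few subintervals and apply the split on each. Alternatively, I will expand $D$ and $K$ in powers of $(t-c_J)$, which makes the radius small and the monotone bounds sharp.
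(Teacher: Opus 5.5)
Step~2 of your proposal rests on a false sign: $D$ is not positive on $I$. Expanding gives $D(t)=11520t^6+25200t^5+14700t^4+192t^3-156t^2-420t+8$. Near $t=1/10$ the term $-420t\approx-42$ dominates the constant $8$. Indeed $D(49/500)\approx-32.88$ and $D(1/10)\approx-33.63$, so $D$ is negative throughout $I$; this is the entry $D<-32$ in Table~\ref{tab:bounds}.

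Run as you describe, your coefficient split gives a lower bound of about $-33.8$ on $I$, so it rules out nothing about poles. The Step~4 formula with $(\inf D)^2$ in the denominator is also invalid when $D<0$. Squaring a negative lower bound overestimates $\min_I D^2$, so the resulting bound on $|h'|$ is unjustified. What you need on each interval is a positive lower bound for $|D|$.

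On $I$, run the split the other way. The positive-coefficient part at $t=1/10$ (about $9.926$) minus the negative part at $t=49/500$ (about $42.658$) gives $D<-32.7$. Hence $|D|>32$ and $h$ has no poles on $I$. Now use $F(1/10)=2.2374$ and $\max_I|K|=K(1/10)=1.126$ (note $K(49/500)\approx-0.68$). This gives $\sup_I|h'|\le 24(2.2374)^2(1.126)/32.7^2\approx0.13<1/5$.

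On $J$ your plan works as written, and the obstacle you anticipate does not arise. The split gives $D>1995$. With $F(519/1000)\approx9.672$ and $K(519/1000)\approx2497.3$, you get $\sup_J|h'|$ at most about $1.41<3/2$, so no subdivision or re-centering is needed.

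Once the sign on $I$ is corrected, your treatment of $F$ and $K$ is the same as the paper's: positivity and monotonicity of $F$, $K'>0$ on $I\cup J$, and endpoint evaluation. For $D$ the paper takes a different route. It proves monotonicity via $D'=12S$, with $S<0$ on $I$ and $S>0$ on $J$, so the endpoint values reveal the sign of $D$ automatically. Your split avoids differentiating $D$. The cost is a slightly looser bound, and you must choose the direction of the bound according to the sign of $D$ on each interval.
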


\begin{table}[htbp]
\centering
\caption{Polynomial bounds on the intervals in~\eqref{eq:intervals}.}
\label{tab:bounds}
\begin{tabular}{c c c c}
\hline
Interval & $F(t)$ & $K(t)$ & $D(t)$\\
\hline
$I$ & $0<F<9/4$ & $|K|<6/5$ & $D<-32$\\
$J$ & $0<F<39/4$ & $0<K<2500$ & $D>1990$\\
\hline
\end{tabular}
\end{table}
We can now prove the result.
\begin{proof}[Proof of Theorem~\ref{thm:quartic}]
The Newton cycle, including its superattracting character, was already
established in Section~\ref{sec:construction}. We therefore only have
to complete the argument for Halley's method.

The centers and radii of the intervals in~\eqref{eq:intervals} are
\[
c_I=\frac{99}{1000},\qquad r_I=\frac1{1000},\qquad
c_J=\frac{1037}{2000},\qquad r_J=\frac1{2000}.
\]
Exact rational evaluation gives
\begin{equation}\label{eq:centers}
|h(c_I)-c_J|<\frac1{40000},\qquad
|h(c_J)-c_I|<\frac1{5000}.
\end{equation}
Combining these estimates with Lemma~\ref{lem:bounds}, for $x\in I$
we obtain
\[
|h(x)-c_J|
\le |h(c_I)-c_J|+\frac15|x-c_I|
<
\frac1{40000}+\frac15r_I
=\frac9{40000}<r_J.
\]
Thus $h(I)\subset\interior J$. Similarly, for $y\in J$,
\[
|h(y)-c_I|
\le |h(c_J)-c_I|+\frac32|y-c_J|
<
\frac1{5000}+\frac32r_J
=\frac{19}{20000}<r_I,
\]
and hence $h(J)\subset\interior I$.

Moreover,
\[
\left(\sup_I|h'|\right)\left(\sup_J|h'|\right)
<\frac15\cdot\frac32=\frac3{10}<1.
\]
Lemma~\ref{lem:contraction} therefore gives a unique two-cycle
$t_1\leftrightarrow t_2$ alternating between the two intervals, with
\[
t_1\in\interior I,\qquad t_2\in\interior J,
\]
and multiplier of modulus strictly smaller than $3/10$.

By the reflection argument of Section~\ref{sec:construction},
\[
z_{H,1}=-t_1,\qquad z_{H,2}=-t_2
\]
form a two-cycle of $H_P$. Since $F>0$ on $I\cup J$, neither point is
a root of $P$, so the cycle is extraneous. The bounds on $D$ in
Lemma~\ref{lem:bounds} show that $h$, and hence $H_P$, is holomorphic
near the cycle. It is therefore an attracting complex cycle with a
nonempty open basin.

Finally,
\[
\disc(P)=-167753740\ne0,
\]
so $P$ has four distinct roots. Thus both Newton's and Halley's methods
possess extraneous attracting two-cycles, and neither has
almost-everywhere root convergence.
\end{proof}

\section{Numerical simulations}\label{sec:basins}
The analytic proof establishes the two cycles and their open complex basins. The simulations below serve a complementary purpose, i.e. showing where root-attracted and cycle-attracted behavior is observed, and how an iteration can approach a nonroot cycle while its polynomial residual remains nonzero. 

For both methods, we sample the same two regions of the complex plane,
\begin{equation}\label{eq:windows}
\begin{aligned}
\Omega_{\mathrm{wide}}&=[-0.8,1.8]+i[-0.975,0.975],\\
\Omega_{\mathrm{local}}&=[-0.12,0.02]+i[-0.0525,0.0525].
\end{aligned}
\end{equation}
Each portrait uses a uniform $1600\times1200$ grid of cell centers, a budget of $200$ iterations, and absolute proximity tolerance $10^{-8}$. The two methods use identical windows, equal-aspect axes, and the same color key. 

The four labels $R_1,\ldots,R_4$ record proximity to the corresponding root references $r_1,\ldots,r_4$. The vermilion class $C$ records two successive iterates close to opposite points of the method's nonroot two-cycle. Light gray, labeled $U$, is reserved for an orbit that remains unclassified. 

Figure~\ref{fig:overview} gives the common overview. All four roots and both points of each nonroot cycle are marked; the dashed rectangle locates the smaller window. Most sampled initial values at this scale are assigned to roots. The small cycle-labeled regions are easier to see in Figure~\ref{fig:detail}, which shows exactly the same local window for Newton and Halley. It contains $0$ for Newton and $z_{H,1}$ for Halley, but no polynomial root.

\begin{figure}[htbp]
\centering
\includegraphics[width=.485\linewidth]{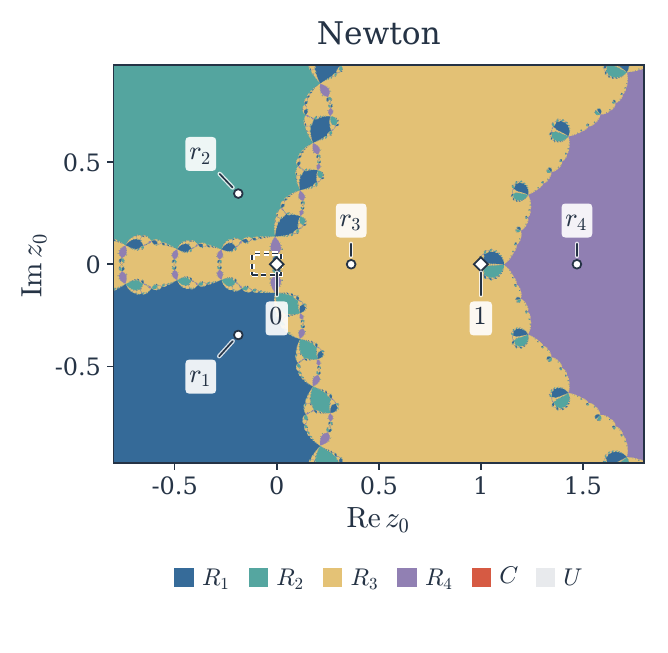}\hfill
\includegraphics[width=.485\linewidth]{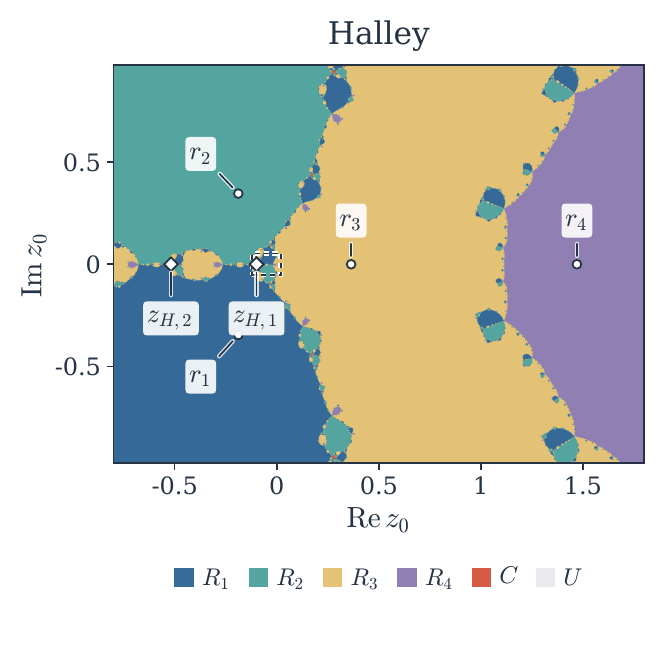}
\caption{Numerical basin portraits in $\Omega_{\mathrm{wide}}$. Circles mark roots and diamonds mark the method's nonroot two-cycle. The dashed rectangle identifies the common zoom in Figure~\ref{fig:detail}.}
\label{fig:overview}
\end{figure}

\begin{figure}[htbp]
\centering
\includegraphics[width=.485\linewidth]{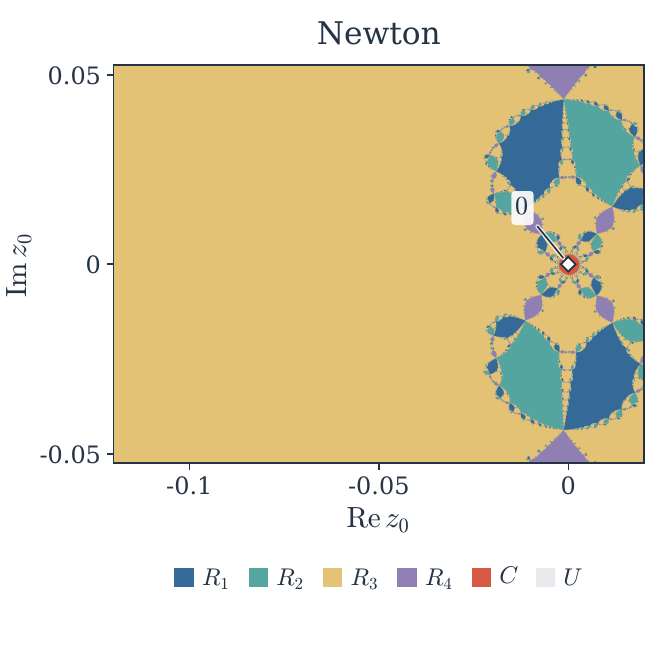}\hfill
\includegraphics[width=.485\linewidth]{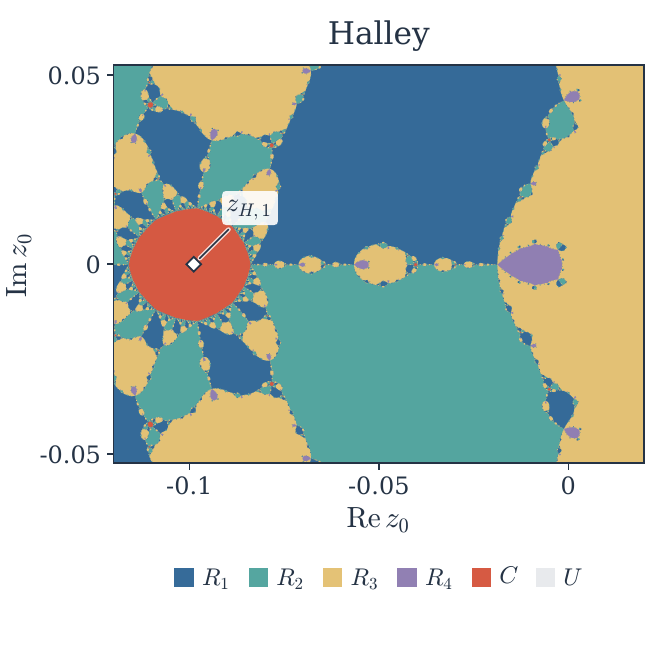}
\caption{Detail in the common window $\Omega_{\mathrm{local}}$, with unchanged colors. The vermilion patches are assigned numerically to $0\leftrightarrow1$ for Newton and to $z_{H,1}\leftrightarrow z_{H,2}$ for Halley. Only one point of each cycle lies in the window.}
\label{fig:detail}
\end{figure}

The larger cycle-colored patch for Halley in this zoom is a feature of the selected window, not a comparison of global reliability. Since $P$ has real coefficients, complex conjugation exchanges the root labels $R_1$ and $R_2$ and preserves the other classes, as reflected in the portraits.

To complement the spatial portraits, consider the real starting values $z_0=10^{-3}$ for Newton and $z_0=-0.099$ for Halley. For either iteration $z_{n+1}=R_P(z_n)$, define
\begin{equation}\label{eq:diagnostics}
\delta_k=|z_{2k+2}-z_{2k}|,\qquad
\rho_k=|P(z_{2k})|,\qquad k=0,\ldots,6.
\end{equation}
Figure~\ref{fig:orbits} plots these quantities from computations performed at $100$ decimal digits. The two-step difference decreases rapidly in both examples, while the polynomial residual stays away from zero. For Newton, the observed even iterates lie increasingly close to $0$, where $P(0)=2$. For Halley, the corresponding residual is numerically close to $2.2337$. A small two-step difference therefore records near-recurrence, not progress toward a root. 
\begin{figure}[htbp]
\centering
\includegraphics[width=.485\linewidth]{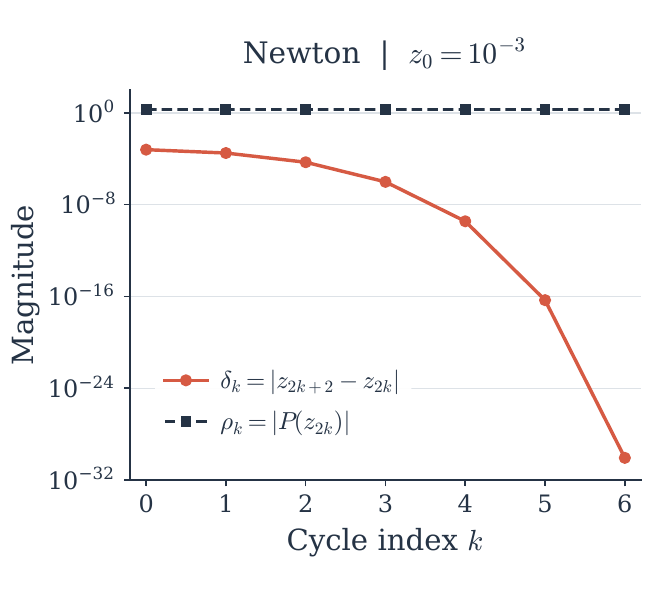}\hfill
\includegraphics[width=.485\linewidth]{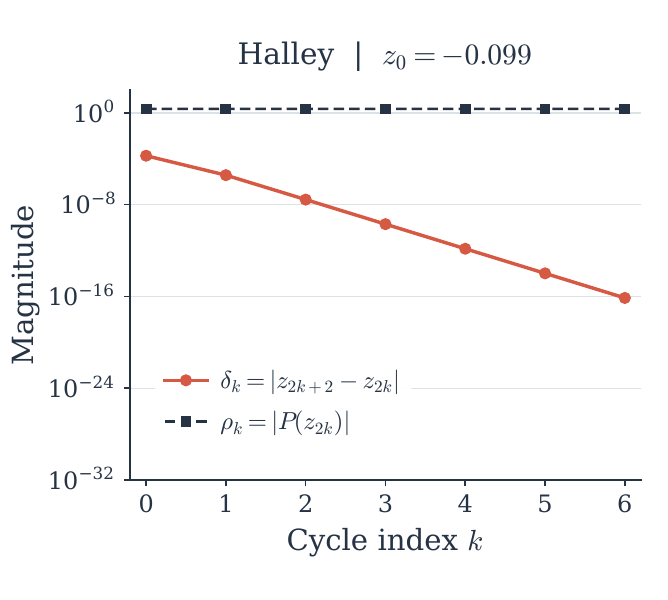}
\caption{Two-step difference $\delta_k$ and root residual $\rho_k$ along representative real orbits, on common logarithmic axes. Rapid approach to a two-cycle coexists with a nonvanishing root residual.}
\label{fig:orbits}
\end{figure}

\section{Discussion}\label{sec:discussion}

We have exhibited an explicit quartic polynomial with distinct roots for which
both Newton's and Halley's methods possess extraneous attracting two-cycles. This complements the construction of Collins and
Hunter~\cite{CollinsHunter}, who produced, for every $d\ge5$, polynomials for
which Newton's, Halley's, and Schr\"oder's methods share the same
superattracting two-cycle $\{0,1\}$. Thus, for the Newton--Halley pair, our
quartic example fills the degree-four gap left by their construction. There is,
however, an essential difference: in our example the attracting cycles of the
two methods are distinct. The following result shows that this is
unavoidable in degree four.

\begin{proposition}[No common attracting two-cycle in degree four]
\label{prop:no-common-two-cycle}
A quartic polynomial cannot have a common extraneous
two-cycle that is attracting for both Newton's and Halley's methods.
\end{proposition}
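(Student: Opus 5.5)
The plan is to show that a common two-cycle pins the polynomial down completely up to affine change of variable, and then to compute the Halley multiplier directly. Suppose $p$ is a quartic and $z_1\ne z_2$ form an extraneous two-cycle for both $N_p$ and $H_p$. We may assume both maps are holomorphic near the cycle, since otherwise there is no multiplier to speak of. Then $p(z_j)\ne0$, and $p'(z_j)\ne0$ because $N_p(z_j)=z_{3-j}$ is finite.

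\emph{Step 1.} Compare the two steps at $z_1$. From $N_p(z_1)=H_p(z_1)=z_2$ we get
\[
\frac{p(z_1)}{p'(z_1)}=\frac{2p(z_1)p'(z_1)}{2p'(z_1)^2-p(z_1)p''(z_1)}.
\]
Since $p(z_1)\ne0$, cross-multiplying gives $2p'(z_1)^2-p(z_1)p''(z_1)=2p'(z_1)^2$, hence $p''(z_1)=0$. The same argument at $z_2$ gives $p''(z_2)=0$. As $p''$ is a quadratic and $z_1\ne z_2$, the cycle points are exactly the two distinct zeros of $p''$.

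\emph{Step 2.} Normalize. Both maps are invariant under scaling $p$ by a nonzero constant. They are also conjugated by affine changes $z\mapsto\alpha z+\beta$, which preserve multipliers. Hence we may take $z_1=0$, $z_2=1$ and $p$ monic. Then $p''=12z(z-1)$, so $p=z^4-2z^3+cz+e$. The Newton conditions of Section~\ref{sec:construction} are $p(0)=-p'(0)$ and $p(1)=p'(1)$. The first gives $e=-c$. The second reads $-1+c+e=-2+c$, so $e=-1$ and $c=1$. Thus
\[
p(z)=z^4-2z^3+z-1,
\]
with $p'(0)=1$, $p'(1)=-1$ and $p(0)=p(1)=-1$, consistent with the assumptions. (Newton is automatically superattracting here, since $N_p'=pp''/p'^2$ vanishes at both points.)

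\emph{Step 3.} Compute the Halley multiplier. We use the derivative formula
\[
H_p'=\frac{p^2\bigl(3p''^2-2p'p'''\bigr)}{(2p'^2-pp'')^2}
\]
from Section~\ref{sec:construction}. At the zeros of $p''$ it reduces to
\[
H_p'=-\frac{p^2p'''}{2p'^3}.
\]
With $p'''=24z-12$:
\begin{itemize}[label={}]
\item at $z=0$: $H_p'(0)=-(1)(-12)/2=6$;
\item at $z=1$: $H_p'(1)=-(1)(12)/(2\cdot(-1))=6$.
\end{itemize}
So $\mu_H=36>1$ and the cycle is repelling for Halley, which proves the proposition.

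The only delicate point is Step 1, namely ensuring the degenerate cases are excluded. These are $p(z_j)=0$, excluded by extraneousness, and $p'(z_j)=0$ or a pole of $H_p$, excluded by holomorphy near the cycle. Once these are ruled out, everything else is a short exact computation.
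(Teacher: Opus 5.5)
Your proposal is correct and follows essentially the same route as the paper: you equate the Newton and Halley steps to get $p''(z_1)=p''(z_2)=0$, then normalize the cycle to $0\leftrightarrow1$, which forces $p(z)=z^4-2z^3+z-1$ up to scaling. Finally you evaluate $H_p'=-p^2p'''/(2p'^3)$ at the zeros of $p''$ to obtain the Halley multiplier $36$; the paper's only cosmetic difference is that it integrates $p''=Az(z-1)$ with free constants rather than assuming $p$ monic.
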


The proof is given in Appendix~\ref{app:ausiliary}. More precisely,
the calculation there shows that any common extraneous two-cycle
of the Newton and Halley maps of a quartic has Halley multiplier
$36$, and is therefore repelling for Halley's method. Thus the
use of distinct cycles in our example is not merely a consequence
of the parameter selected by the search.

Distinct cycles do not require disjoint failure basins. For the
polynomial $P$ of Theorem~\ref{thm:quartic}, it can also be checked
directly that $z_0=-1/2$ is attracted to the respective extraneous
two-cycle under each method. This can be verified by following the
two orbits into trapping neighborhoods of the corresponding cycles;
we omit the details. Since both basins are open, their intersection
contains an open neighborhood of this initial value. Running the
two methods separately from the same starting point therefore need
not avoid extraneous attraction.

The search procedure can also be extended to include additional
root-finding methods. Within the family~\eqref{eq:family}, the
superattracting Newton cycle is already prescribed, so one can
screen the remaining iterations for extraneous attracting cycles,
allowing their periods to differ. For example, consider the
Schr\"oder map used by Collins and Hunter~\cite{CollinsHunter},
also known as Chebyshev's method,
\[
S_p(z)=z-\frac{p(z)}{p'(z)}
       -\frac{p(z)^2p''(z)}{2p'(z)^3}.
\]
Increasing the denominator bound in Algorithm~\ref{alg:search} to
$500$ and adding a periodic-orbit test for $S_p$, including period
three, leads to the parameter $a=-576/395$ and the polynomial
\[
Q(z)=395p_{-576/395}(z)
    =395z^4-576z^3-33z+33.
\]
Newton's map retains the exact superattracting cycle
$0\leftrightarrow1$. Numerical computations identify an extraneous
attracting two-cycle for Halley's method and an extraneous attracting three-cycle for Schr\"oder's method.
The estimated cycle multipliers are
$
\mu_H\approx0.091,
$ and $
\mu_S\approx-0.005$.
These computations indicate simultaneous failure of all three
methods on the same quartic, through cycles of periods $2$, $2$,
and $3$.

The two numerically located cycles for $Q$ may be certified by the
same rational interval argument used in
Section~\ref{sec:analytic-proof}. The additional interval bounds and certificates
are not included here.

For the Newton--Halley pair, the degree-three question remains.
Collins and Hunter~\cite{CollinsHunter} conjectured that, for every
cubic polynomial, at least one of Newton's and Halley's methods is
generally convergent. A proof of that conjecture, together with
Theorem~\ref{thm:quartic}, would establish degree four as the least
degree in which simultaneous failure of almost-everywhere root
convergence can occur for the pair.

\appendix
\section{Auxiliary proofs}\label{app:ausiliary}
\begin{proof}[Proof of Lemma \ref{lem:contraction}]
The map $h^{\circ2}$ sends $I$ into itself and satisfies
\[
\sup_I |(h^{\circ2})'|
\le
\left(\sup_I |h'|\right)\left(\sup_J |h'|\right)<1.
\]
Hence $h^{\circ2}$ has a unique fixed point $t_1\in I$. Setting
$t_2=h(t_1)\in J$, we obtain a two-cycle because $I\cap J=\varnothing$.
Its multiplier has modulus less than one, so the cycle is attracting.
\end{proof}

\begin{proof}[Proof of Lemma \ref{lem:bounds}]
We obtain the polynomial bounds by monotonicity and evaluation at
the interval endpoints. The polynomial $F$ is positive and strictly
increasing on $(0,\infty)$. Also,
\[
K'(t)=23040t^3+25200t^2+7350t-96>0
\]
on $I\cup J$, since $t\ge49/500$ there and
$7350(49/500)>96$.

For $D$, differentiation gives $D'=12S$, where
\[
S(t)=5760t^5+10500t^4+4900t^3+48t^2-26t-35.
\]
On $I$, the bound $t\le1/10$ yields
\[
S(t)
\le
\frac{5760}{10^5}+\frac{10500}{10^4}
+\frac{4900}{10^3}+\frac{48}{10^2}-35
<0.
\]
On $J$, retaining only the cubic term among the positive terms gives
\[
S(t)
\ge
4900\left(\frac{259}{500}\right)^3
-26\left(\frac{519}{1000}\right)-35
>0.
\]
Consequently, $D$ decreases on $I$ and increases on $J$.

Evaluation at the rational endpoints now gives all the bounds in
Table~\ref{tab:bounds}. In particular, $D$ does not vanish on either
interval, so $h$ has no poles there. Substituting these bounds
into~\eqref{eq:hprime}, we obtain
\[
\sup_I|h'|
<
\frac{24(9/4)^2(6/5)}{32^2}
<\frac15,
\qquad
\sup_J|h'|
<
\frac{24(39/4)^2\,2500}{1990^2}
<\frac32.
\]
\end{proof}
\begin{proof}[Proof of Proposition \ref{prop:no-common-two-cycle}]
Suppose, by contradiction, that a quartic polynomial $p$ has a common
extraneous attracting two-cycle for Newton's and Halley's maps. By an
affine change of variable we may assume that this cycle is
\[
0\longleftrightarrow 1.
\]
Indeed, if $A(z)=\alpha+(\beta-\alpha)z$ sends $0$ and $1$ to the two
points $\alpha,\beta$ of the cycle, then for $q=p\circ A$ one has
\[
N_q=A^{-1}\circ N_p\circ A,
\qquad
H_q=A^{-1}\circ H_p\circ A.
\]
Thus the multiplier of the cycle is unchanged.

Since the cycle is extraneous,
\[
p(0)p(1)\neq0.
\]
Moreover, because both points are mapped to finite points by $N_p$, we
also have
\[
p'(0)p'(1)\neq0.
\]
At each $x\in\{0,1\}$, the two methods have the same image, hence
\[
N_p(x)=H_p(x).
\]
Using
\[
N_p(x)=x-\frac{p(x)}{p'(x)},
\qquad
H_p(x)=x-
\frac{2p(x)p'(x)}
{2p'(x)^2-p(x)p''(x)},
\]
and cancelling $p(x)\neq0$, we obtain
\[
\frac1{p'(x)}
=
\frac{2p'(x)}
{2p'(x)^2-p(x)p''(x)}.
\]
It follows that
\[
p(x)p''(x)=0,
\]
and therefore
\[
p''(0)=p''(1)=0.
\]

Since $p$ has degree four, $p''$ is quadratic. Hence, for some
$A\neq0$,
\[
p''(z)=A z(z-1).
\]
Integrating twice,
\[
p'(z)
=
A\left(\frac{z^3}{3}-\frac{z^2}{2}\right)+B,
\]
and
\[
p(z)
=
A\left(\frac{z^4}{12}-\frac{z^3}{6}\right)+Bz+C.
\]
The relations $N_p(0)=1$ and $N_p(1)=0$ are respectively equivalent to
\[
p(0)=-p'(0),
\qquad
p(1)=p'(1).
\]
The first gives $C=-B$, while the second then gives
\[
B=\frac{A}{12},
\qquad
C=-\frac{A}{12}.
\]
Thus, up to multiplication by a nonzero constant,
\[
p(z)=z^4-2z^3+z-1.
\]

It remains to inspect the common cycle for Halley's map. For any
polynomial,
\[
H_p'(z)
=
\frac{p(z)^2
\bigl(3p''(z)^2-2p'(z)p'''(z)\bigr)}
{\bigl(2p'(z)^2-p(z)p''(z)\bigr)^2}.
\]
At $z=0,1$ we have $p''(z)=0$, so
\[
H_p'(z)
=
-\frac{p(z)^2p'''(z)}{2p'(z)^3}.
\]
For
\[
p(z)=z^4-2z^3+z-1
\]
one has
\[
p'''(z)=24z-12,
\]
and a direct evaluation yields
\[
H_p'(0)=H_p'(1)=6.
\]
Consequently the multiplier of the common two-cycle for Halley's map is
\[
\mu_H=H_p'(0)H_p'(1)=36.
\]
The cycle is therefore repelling for Halley's method, contradicting the
assumption that it is attracting for both methods.
\end{proof}

\end{document}